\documentclass[11pt]{article}

\usepackage[margin=1.1in]{geometry}
\usepackage{amsmath,amssymb,amsthm}
\usepackage{hyperref}
\usepackage{comment}
\usepackage{graphicx} 
\usepackage{listings}
\newtheorem{theorem}{Theorem}
\newtheorem{lemma}{Lemma}

\DeclareMathOperator{\Sp}{Sp}
\newcommand{\ZZ}{\mathbb{Z}}
\newcommand{\QQ}{\mathbb{Q}}
\newcommand{\Gammaab}{\Gamma(\alpha,\beta)}

\title{Certifying Arithmeticity for Two Degree-Six Symplectic Hypergeometric Monodromy Groups}

\author{
  J. Maxwell Riestenberg\thanks{Max Planck Institute for Mathematics in the Sciences. Email:
		\href{mailto:riestenberg@mis.mpg.de} {\nolinkurl {riestenberg@mis.mpg.de}}.}
\and
  Diaaeldin Taha\thanks{Max Planck Institute for Mathematics in the Sciences. Email:
		\href{mailto:taha@mis.mpg.de} {\nolinkurl {taha@mis.mpg.de}}.} 
\and
  Steve Trettel\thanks{University of San Francisco. Email:
		\href{mailto:strettel@usfca.edu} {\nolinkurl {strettel@usfca.edu}}.} 
\and
  Anna Wienhard\thanks{Max Planck Institute for Mathematics in the Sciences. Email:
		\href{mailto:wienhard@mis.mpg.de} {\nolinkurl {wienhard@mis.mpg.de}}.} 
        \thanks{Center for Scalable Data Analytics and Artificial Intelligence, Leipzig, Germany}
}

\date{}

\begin{document}

\maketitle

\begin{abstract}
We prove arithmeticity for two degree-six symplectic hypergeometric monodromy groups, called C-47, and C-55 in the paper  \cite{BajpaiDonaNitsche2025Thin} by Bajpai-Dona-Nitsche. This settles two of the three remaining cases, whose classification was left open by \cite{BajpaiDonaNitsche2025Thin}. The arithmeticity certificates were found with AlphaEvolve and then independently verified with exact matrix arithmetic over $\QQ$ using a computer.
We include illustrations of limit sets of several degree-six symplectic hypergeometric monodromy groups. Based on these illustration we conjecture C-32 to be thin. 
\end{abstract}

\section{Introduction}
It is easy to give a subgroup of a matrix group such as $\mathrm{GL}(n,\mathbb{R})$, just write down some invertible matrices and consider the group $\Gamma$ they generate. When you define a group this way, it is however quite difficult in general to obtain information about the group $\Gamma$. It is discrete? And if yes, how big is it? What is its Zariski closure? Is it a lattice? 

One way to ensure discreteness is to pick matrices as generators which lie in an arithmetic lattice itself, for example in the group of integer matrices $\mathrm{SL}(n,\mathbb{Z})$ inside of $\mathrm{SL}(n,\mathbb{R})$. Then the main question becomes whether $\Gamma$ is of finite index in $\mathrm{SL}(n,\mathbb{Z})$ and thus itself a lattice, or of infinite index and thus a much smaller discrete subgroup. 
Such discrete subgroups of arithmetic lattices in $\mathrm{SL}(n,\mathbb{R})$ or more general semisimple matrix groups, which are Zariski dense, but of infinite index are called ``thin groups''. 

The names ``thin groups'' was introduced by Peter Sarnak who promoted their more than twenty years ago, motivated by the affine sieve methods developed by Bourgain-Gamburd-Sarnak, which allows to draw number theoretic conclusion for such groups, which were out of reach before. 

Particular nice families of discrete subgroups in arithmetic lattices arise from monodromy groups, see for example the discussion in Section~3.5. in \cite{Sarnak2014NotesThin}. They are often given by explicit generating matrices, and arise in interesting families. 

A particular nice family, which received a lot of attention over the past 20 years are monodromy representations of the classical hypergeometric equation.

Their monodromy representations are representations of the fundamental group of $\mathbb{C}\mathbb{P}^1 \backslash \{ 0, 1, \infty\}$ into $\mathrm{GL}_n(\mathbb{R})$, which are generated by the local monodromies around $0$, $1$, and $\infty$ given by matrices $A$, $B$, $C = A^{-1}B$. See 
\cite{Levelt1961HypergeometricFunctions,BeukersHeckman1989hypergeometric} for details. 

A special interest have been in such hypergeometric monodromy groups, which preserve a symplectic from $\Omega$ and are Zariski dense in $\mathrm{Sp}_{\Omega}(n,\mathbb{R})$.

The symplectic degree-four cases, which are connected to Calabi--Yau threefolds became
an early test family.  Brav--Thomas proved thinness for seven of them
\cite{BravThomas2014Thin}, while arithmeticity results of
Singh--Venkataramana and Singh settled the complementary cases
\cite{SinghVenkataramana2014Arithmeticity,Singh2015Four}.  In degree six,
Bajpai--Dona--Singh--Singh studied the corresponding symplectic
hypergeometric groups, motivated in part by a question of Katz about whether
the maximally unipotent degree-six family follows a similar pattern
\cite{BajpaiDonaSinghShashank2021degreesix}.  Bajpai--Dona--Nitsche later
settled many further degree-six cases, including many thin cases, by
computer-assisted ping-pong \cite{BajpaiDonaNitsche2025Thin}.  In their work \cite[Table 3]{BajpaiDonaNitsche2025Thin} they 
list the three remaining degree-six symplectic cases, C-32, C-47, and C-55, for
which neither arithmeticity nor thinness was known.

This note proves that two of these three remaining cases are arithmetic. We also include visualizations of the limit sets for all three cases, C-32, C-47, and C-55, which lead us to conjecture that C-32 is a thin group. However, we are unable to prove this at this point. 

To prove arithmeticity of the two groups 
C-47 and C-55, we use the arithmeticity criterion of Bajpai--Dona--Nitsche
\cite{BajpaiDonaNitsche2026Arithmetic} building on previous arithmeticity criteria of Venkataramana, Venkataramana-Singh
\cite{Venkataramana1987Zariskidensesubgroups, SinghVenkataramana2014Arithmeticity,Singh2015Four}. 
Their criterion reduces the problem to 
exhibiting explicit words in the  generators $A$ and $B$ of the hypergeometric monodropmy whose
conjugates of the standard rank-one unipotent give a  pair of commuting transvections.  The words
were found with AlphaEvolve \cite{NovikovEtAl2025AlphaEvolve,georgiev2025mathematical}.  Once the
words are fixed, the verification is deterministic and uses only exact matrix arithmetic over $\QQ$.

\section{Conventions and arithmeticity criterion}

Let $\alpha,\beta\in\QQ^6$ be parameter multisets.  The associated
cyclotomic polynomials are
\[
  f_\alpha(x)=\prod_{j=1}^6\bigl(x-\exp(2\pi i\alpha_j)\bigr),\qquad
  g_\beta(x)=\prod_{j=1}^6\bigl(x-\exp(2\pi i\beta_j)\bigr).
\]
For a degree-six monic polynomial
\[
  h(x)=x^{6}+c_5x^5+c_4x^4+c_3x^3+c_2x^2+c_1x+c_0,
\]
we use the companion matrix
\[
C(h)=
\begin{pmatrix}
0&0&0&0&0&-c_0\\
1&0&0&0&0&-c_1\\
0&1&0&0&0&-c_2\\
0&0&1&0&0&-c_3\\
0&0&0&1&0&-c_4\\
0&0&0&0&1&-c_5
\end{pmatrix}.
\]

The two groups considered in this note are the hypergeometric monodromy groups
labelled C-47 and C-55 in \cite[Table 3]{BajpaiDonaNitsche2025Thin}. We recall the definitions of these two groups. For
C-47, the parameter multisets are
\[
  \alpha_{47}=(0,0,1/5,2/5,3/5,4/5),\qquad
  \beta_{47}=(1/2,1/2,1/3,1/3,2/3,2/3),
\]
and the corresponding polynomials are
\[
  f_{47}=f_{\alpha_{47}}=x^6-x^5-x+1,\qquad
  g_{47}=g_{\beta_{47}}=x^6+4x^5+8x^4+10x^3+8x^2+4x+1.
\]
For C-55, the parameter multisets are
\[
  \alpha_{55}=(0,0,1/8,3/8,5/8,7/8),\qquad
  \beta_{55}=(1/2,1/2,1/12,5/12,7/12,11/12),
\]
and the corresponding polynomials are
\[
  f_{55}=f_{\alpha_{55}}=x^6-2x^5+x^4+x^2-2x+1,\qquad
  g_{55}=g_{\beta_{55}}=x^6+2x^5-2x^3+2x+1.
\]
The corresponding monodromy groups are
\[
  \Gamma_{47}=\langle C(f_{47}),C(g_{47})\rangle,\qquad
  \Gamma_{55}=\langle C(f_{55}),C(g_{55})\rangle.
\]
By the Beukers--Heckman classification
\cite{BeukersHeckman1989hypergeometric}, these primitive self-reciprocal
cyclotomic hypergeometric groups have Zariski closure equal to the
corresponding symplectic group.  Thus, after fixing the preserved integral
symplectic form $\Omega$ in each case, $\Gamma_{47}$ and $\Gamma_{55}$ are
Zariski-dense in $\Sp_\Omega(\mathbb{Z})$.

In either case, write
\[
  A=C(f),\qquad B=C(g),
\]
where $(f,g)$ is the corresponding pair of polynomials.  Also write
$a=A^{-1}$ and $b=B^{-1}$.  Words are read left to right as actions on column
vectors.  Thus, if $w=\ell_1\ell_2\cdots\ell_k$ with
$\ell_i\in\{A,B,a,b\}$, then the corresponding matrix is
\[
  M(w)=M(\ell_k)\cdots M(\ell_2)M(\ell_1),
\]
where $M(A)=A$, $M(B)=B$, $M(a)=A^{-1}$, and $M(b)=B^{-1}$.

The arithmeticity proofs for C-47 and C-55 use the following criterion of Bajpai--Dona--Nitsche
\cite[Lemma 1]{BajpaiDonaNitsche2026Arithmetic}, which builds on work of
Singh--Venkataramana \cite{SinghVenkataramana2014Arithmeticity} and
Venkataramana \cite{Venkataramana1987Zariskidensesubgroups}.

\begin{lemma}[Bajpai--Dona--Nitsche]\label{lem:bdn}
Let $n\geq 2$, let $\Omega$ be a nondegenerate symplectic form on $\QQ^{2n}$
which is integral on $\ZZ^{2n}$, and let
$\Gamma<\Sp_{\Omega}(\ZZ)$ be Zariski-dense. Then $\Gamma$ has finite index
in $\Sp_{\Omega}(\ZZ)$ if and only if $\Gamma$ contains two transvections
\[
  X_i=1+\lambda_i x_i\Omega(x_i,\cdot),\qquad
  \lambda_i\in\QQ^\times,\quad i=1,2,
\]
whose directions $x_1,x_2$ are linearly independent and
$\Omega$-orthogonal, i.e. $\Omega(x_1,x_2)=0$.
\end{lemma}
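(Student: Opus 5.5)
The lemma has two directions, and I will treat them separately. For the forward direction, assume $\Gamma$ has finite index in $\Sp_\Omega(\ZZ)$. Choose a symplectic basis of $\QQ^{2n}$ for $\Omega$ and let $x_1,x_2$ be two distinct basis vectors from the same Lagrangian half, say $e_1,e_2$. Then $x_1,x_2$ are linearly independent and $\Omega(x_1,x_2)=0$. After scaling, $x_i\in\ZZ^{2n}$, and the transvection $T_i=1+x_i\Omega(x_i,\cdot)$ lies in $\Sp_\Omega(\ZZ)$ because $\Omega$ is integral on $\ZZ^{2n}$. Since $\Gamma$ has finite index, some power $T_i^{m}=1+m\,x_i\Omega(x_i,\cdot)$ with $m\ge1$ lies in $\Gamma$. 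Taking $\lambda_i=m$ gives the required pair.

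The converse is the substantive part, and I will reduce it to Venkataramana's theorem. Let $P$ be the stabilizer of the isotropic flag, or more precisely of the isotropic $2$-plane $W=\langle x_1,x_2\rangle$. The transvections $1+t\,x\Omega(x,\cdot)$ with $x\in W$ together with the mixed terms $1+s(x_1\Omega(x_2,\cdot)+x_2\Omega(x_1,\cdot))$ form the abelian subgroup $\mathrm{Sym}^2(W)\subset U$. Here $U$ is the unipotent radical of the parabolic $P$ stabilizing $W$, and $U$ is a Heisenberg-type group whose center $Z(U)\cong\mathrm{Sym}^2 W$ is $3$-dimensional.

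The plan has three steps.

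\textbf{Step 1.} Produce the mixed term. Since $X_1$ and $X_2$ commute but only span two lines in $Z(U)$, use Zariski density. Conjugating $X_1$ by elements $\gamma\in\Gamma$ gives transvections $1+\lambda_1\,\gamma x_1\Omega(\gamma x_1,\cdot)$. For $y$ in the $\Gamma$-orbit of $x_1$, the commutator $[1+\lambda y\Omega(y,\cdot),\,X_2]$ is again unipotent in $\Gamma$ when $\Omega(y,x_2)\neq0$. Alternatively, work entirely with the Levi action: the elements of $\Gamma\cap P$ act on $Z(U)$ through $\mathrm{GL}(W)$.

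Concretely, the standard route is as follows. The Zariski closure of the group generated by $\Gamma$-conjugates of $X_1$ is normal in $\Sp_{2n}$, hence everything. From Zariski density and the fact that $\Gamma$ contains a nontrivial unipotent, one gets a full-rank lattice in a root subgroup of every long root type.

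\textbf{Step 2.} Show that $\Gamma\cap Z(U)$ is a lattice in $Z(U)(\mathbb{R})$. Then $\Gamma\cap U(\ZZ)$ has finite index in $U(\ZZ)$, using commutators of $\Gamma$-conjugates inside $U$ with the Heisenberg structure.

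\textbf{Step 3.} Repeat the argument for the opposite unipotent radical $U^-$, obtained by conjugating with a Zariski-dense element of $\Gamma$ that moves $W$ to a transversal isotropic plane. Venkataramana's theorem then applies: for $\mathbb{Q}$-rank at least $2$, which holds because $n\ge2$, a Zariski-dense subgroup containing finite-index subgroups of $U^\pm(\ZZ)$ for opposite parabolics is arithmetic.

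I expect the main obstacle to be Step 2. There I must show that two commuting transvections, which are rank-one elements in $Z(U)$, generate under $\Gamma\cap P$-conjugation and commutators a full lattice in $Z(U)$. This needs care because $\Gamma\cap P$ may a priori be small. My first approach would be to follow Singh--Venkataramana: use the elements $X_1,X_2$ together with $\Gamma$-conjugates whose directions stay in $W^\perp$. I would also use density of $\Gamma$ to ensure that the span of the resulting directions in $\mathrm{Sym}^2W$ is all of it.
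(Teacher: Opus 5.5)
The paper does not prove this lemma. It quotes it from Bajpai--Dona--Nitsche and uses it as a black box, so your argument has to stand on its own. Your forward direction is correct. The converse has a genuine gap, located exactly where the orthogonality hypothesis must do its work. With one transvection, or two non-orthogonal ones, the statement is false: the thin Brav--Thomas groups in $\Sp_4(\ZZ)$ are Zariski dense and contain $A^{-1}B$ together with all its conjugates. Your Step 1 never uses orthogonality, and its claims fail.

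\begin{itemize}
\item When $\Omega(y,x_2)\neq 0$, the two transvections act on the hyperbolic plane $\langle x_2,y\rangle$ as $\begin{pmatrix}1&s\\0&1\end{pmatrix}$ and $\begin{pmatrix}1&0\\t&1\end{pmatrix}$ with $st\neq 0$, and trivially on its orthogonal complement. Their commutator therefore has trace $2+s^2t^2$ on that plane, so it is not unipotent.
\item The claim of ``a full-rank lattice in a root subgroup of every long root type'' has two readings. If it means every rational long root group, it would make the Brav--Thomas groups arithmetic. If it means only the $\Gamma$-conjugates of $\langle X_1\rangle$, it gives nothing new.
\end{itemize}

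Step 2 carries all the content, and you leave it as an expectation. Conjugates ``whose directions stay in $W^\perp$'' do not lie in $\mathrm{Sym}^2W$ unless their directions lie in $W$. Density of a linear span produces no group elements. The missing idea is a mechanism that manufactures a transvection in $\Gamma$ with a third direction in $W$. Here is one that works.
\begin{itemize}
\item Choose $\gamma\in\Gamma$ with $y=\gamma x_1$ satisfying $\Omega(x_1,y)\Omega(x_2,y)\neq 0$; Zariski density provides this. Conjugating $\gamma X_1\gamma^{-1}$ by $X_1^mX_2^k$ gives transvections in $\Gamma$ with directions $y_{m,k}=y+m\lambda_1\Omega(x_1,y)x_1+k\lambda_2\Omega(x_2,y)x_2$.
\item The quantity $\Omega(y,y_{p,q})=-p\lambda_1\Omega(x_1,y)^2-q\lambda_2\Omega(x_2,y)^2$ is a rational linear form in $(p,q)$. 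Hence some $(p,q)\in\ZZ^2$ with $pq\neq 0$ makes it vanish. Then $w=y_{p,q}-y$ lies in $W$ and is proportional to neither $x_1$ nor $x_2$.
\item For each $(m,k)$, the transvections at $y_{m,k}$ and $y_{m,k}+w$ commute. Their quotient $u_{m,k}$ lies in the Heisenberg radical of the stabilizer of $\QQ w$.
\item A direct computation gives $[u_{0,0},u_{1,0}]=1+2\lambda_1^2\Omega(y,y_{1,0})\,w\,\Omega(w,\cdot)$. Since $\Omega(y,y_{1,0})=-\lambda_1\Omega(x_1,y)^2\neq 0$, this is a nontrivial transvection in direction $w$.
\end{itemize}
For $n=2$ this produces a finite-index subgroup of the abelian radical $U_W(\ZZ)$, and Venkataramana's theorem finishes the proof.

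For $n\geq 3$, your sentence that a lattice in $Z(U_W)$ upgrades to finite index in $U_W(\ZZ)$ ``using commutators'' is wrong in principle. Commutators of elements of $U_W$ lie in $Z(U_W)$, so they never reach the non-central part $W\otimes(W^\perp/W)$. Reaching it would require something else, for example transvections in $\Gamma$ with directions in $W^\perp\setminus W$. That is a closed condition which Zariski density does not provide, and your plan supplies nothing of the kind.
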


For the groups considered here, put
\[
  T=A^{-1}B.
\]
This is the standard rank-one unipotent element associated with the local
monodromy at $1$.  For a rank-one unipotent $U$, the one-dimensional subspace
$\operatorname{im}(U-I)$ is its transvection direction.  Since conjugates of
transvections are transvections, any word $\gamma$ in $A^{\pm1},B^{\pm1}$
gives another transvection $\gamma T\gamma^{-1}$.  Therefore, once
preservation of the relevant integral symplectic form has been checked,
Lemma \ref{lem:bdn} reduces arithmeticity to finding a word $\gamma$ such
that the directions of $T$ and $\gamma T\gamma^{-1}$ are linearly independent
and $\Omega$-orthogonal.

\section{C-47 and C-55 are arithmetic}

We now prove that C-47 and C-55 are arithmetic by verifying explicit
certificates found with AlphaEvolve.  Once the witness words are fixed, the
verification is deterministic and uses only exact matrix arithmetic over $\QQ$.

\begin{theorem}\label{thm:main}
The hypergeometric monodromy groups labelled C-47 and C-55 in
\cite[Table 3]{BajpaiDonaNitsche2025Thin} have finite index in their ambient
integral symplectic groups.  In particular, both groups are arithmetic.
\end{theorem}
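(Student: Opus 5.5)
The plan is to apply Lemma~\ref{lem:bdn} separately to $\Gamma_{47}$ and $\Gamma_{55}$. Each case needs three ingredients: an integral nondegenerate symplectic form $\Omega$ preserved by $A$ and $B$; the fact that $T=A^{-1}B$ is a transvection with respect to $\Omega$; and an explicit word $\gamma$ such that the directions of $T$ and $\gamma T\gamma^{-1}$ are linearly independent and $\Omega$-orthogonal. Zariski density in $\Sp_\Omega$ is already supplied by the Beukers--Heckman classification recalled above, so nothing further is needed on that side.

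\textbf{Step 1: the form $\Omega$ and the transvection $T$.} Since $A$ and $B$ are companion matrices of monic polynomials with constant term $1$ that agree except in their last column, $T-I=A^{-1}(B-A)$ has rank one. Its image is spanned by $v=A^{-1}(c_g-c_f)$, where $c_f$ and $c_g$ denote the last columns. I would obtain $\Omega$ by solving the linear system $A^{t}\Omega A=\Omega$, $B^{t}\Omega B=\Omega$, $\Omega^{t}=-\Omega$ over $\QQ$. The standard hypergeometric theory (Levelt) predicts a one-dimensional solution space. I would scale the solution to be integral and primitive, and check $\det\Omega\neq 0$. Invariance under $T$ then forces $T=I+\lambda v\,\Omega(v,\cdot)$ for some $\lambda\in\QQ^\times$, which is read off from one matrix entry. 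All of this is exact rational linear algebra on $6\times 6$ matrices.

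\textbf{Step 2: the certificate.} For a word $\gamma$, the conjugate $\gamma T\gamma^{-1}=I+\lambda(\gamma v)\,\Omega(\gamma v,\cdot)$ is again a transvection in $\Gamma$, with direction $\gamma v$ (using $\Omega$-invariance of $\gamma$). The two conditions to verify are therefore:
\[
\Omega\bigl(v,M(\gamma)v\bigr)=0,
\]
and that $M(\gamma)v$ is not a rational multiple of $v$, which is checked by some nonvanishing $2\times 2$ minor of the matrix $[\,v\mid M(\gamma)v\,]$. With the witness words fixed, both checks are a finite exact computation.

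\textbf{Main obstacle.} The hard part is finding $\gamma$. The condition $\Omega(v,\gamma v)=0$ is a single integer equation, but it holds for very few words, and short words rarely satisfy it. My first attempt would be a search over words of moderate length, preferring structured candidates. Examples are $\gamma=A^{k}$ for small $k$, since $\Omega(v,A^{k}v)$ are essentially the moment coefficients attached to $f$ and $g$, and products $A^{i}B^{j}A^{k}$. Failing that, I would run a heuristic or evolutionary search (as with AlphaEvolve) using $|\Omega(v,\gamma v)|$ as the objective. Once a word is found, the verification is routine, and Lemma~\ref{lem:bdn} yields finite index in $\Sp_\Omega(\ZZ)$, hence arithmeticity, for both groups.
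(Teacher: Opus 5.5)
Your framework matches the paper's. You take $T=A^{-1}B$ and an integral nondegenerate $\Omega$ preserved by $A$ and $B$, find a word $\gamma$ for which $v$ and $\gamma v$ are linearly independent and $\Omega$-orthogonal, and invoke Lemma~\ref{lem:bdn}. Your Steps~1 and~2 are correct. The genuine gap is that you never produce $\gamma$, and $\gamma$ is the only nontrivial ingredient. Lemma~\ref{lem:bdn} is an equivalence, so the existence of such a pair of transvections is essentially the theorem itself. ``Search until a word is found'' therefore replaces the theorem with an equally strong unproved claim. Nothing guarantees the search succeeds, and these are exactly the cases Bajpai--Dona--Nitsche left open. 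The paper's proof \emph{is} the certificate. It gives explicit freely reduced words $w_{47}$ of length $93$ and $w_{55}$ of length $49$, found with AlphaEvolve. It then lists the resulting directions $x_2$ and the explicit forms $\Omega_{47}$ and $\Omega_{55}$, and verifies by exact arithmetic that $\Omega(x_1,x_2)=0$ with $x_1,x_2$ independent.

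The short structured candidates you propose do not obviously suffice, and the simplest family already fails for C-47. Since $T-I=A^{-1}(B-A)=v\,e_6^{t}$, the functional $\Omega(v,\cdot)$ is a nonzero multiple of the last-coordinate functional. So your orthogonality condition is simply $(\gamma v)_6=0$; both vectors $x_2$ in the paper do end in $0$. For C-47 we have $v=(-5,-8,-10,-8,-5,0)$. Iterating $A$ gives last coordinates $0,-5,-13,-23,-31$ for $A^kv$ with $k=0,\dots,4$. Moreover $A^5v=v+36(e_1-e_6)$, and $e_1-e_6$ is fixed by $A$. Hence for $k=5m+r$ with $0\le r\le 4$, the last coordinate of $A^kv$ is $(A^rv)_6-36m$, which vanishes only for $k=0$. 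So no power of $A$ is a witness. What you have is a correct verification scheme plus a search strategy; it becomes a proof only once explicit words are exhibited and checked.
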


\begin{proof}
First consider C-47.  Set
\[
  A=C(f_{47}),\qquad B=C(g_{47}),\qquad T=A^{-1}B.
\]
Let $w_{47}$ be the following freely reduced word of length $93$:
\begin{lstlisting}
bbbbbaabbbbbbAAbbbbbbaabbbbbbAAbbbbbbAAbbbbbbABaBaaBBBBBBAAAbAbaaBaBaBaBaBAAAAAAABaBaB
BaBabaa
\end{lstlisting}
and let $\gamma=M(w_{47})$ be the corresponding group element.

A direct computation over $\ZZ$ gives
\[
  \operatorname{im}(T-I)=\QQ x_1,\qquad
  \operatorname{im}(\gamma T\gamma^{-1}-I)=\QQ x_2,
\]
where
\[
  x_1=(-5,-8,-10,-8,-5,0),
\]
and
\[
  x_2=(491566906334,537748595482,224774947812,
       73905511690,-18977654566,0).
\]
The vectors $x_1$ and $x_2$ are linearly independent.
The integral
symplectic form preserved by $A$ and $B$ is
\[
\Omega_{47}=\begin{pmatrix}
0&29&-50&51&-28&1\\
-29&0&29&-50&51&-28\\
50&-29&0&29&-50&51\\
-51&50&-29&0&29&-50\\
28&-51&50&-29&0&29\\
-1&28&-51&50&-29&0
\end{pmatrix}.
\]
Indeed,
\[
  A^t\Omega_{47}A=\Omega_{47},\qquad
  B^t\Omega_{47}B=\Omega_{47},
\]
and
\[
  \det(\Omega_{47})=1679616\neq 0.
\]
Moreover,
\[
  \Omega_{47}(x_1,x_2)=0.
\]
The same exact computation gives that $T$ and $\gamma T\gamma^{-1}$ are
rank-one unipotents and that
\[
  T \, \gamma T \gamma^{-1} = \gamma T \gamma^{-1} \, T . 
\]
Thus $T$ and $\gamma T\gamma^{-1}$ are commuting transvections whose
directions are linearly independent and $\Omega_{47}$-orthogonal.

Now consider C-55.  Set
\[
  A=C(f_{55}),\qquad B=C(g_{55}),\qquad T=A^{-1}B.
\]
Let $w_{55}$ be the following freely reduced word of length $49$:
\begin{lstlisting}
baaaabaaaabaaaabaaaabaaaabaaaabaaaaaBABaBABAAbaaB
\end{lstlisting}
and let $\gamma=M(w_{55})$ be the corresponding group element.

A direct computation over $\ZZ$ gives
\[
  \operatorname{im}(T-I)=\QQ x_1,\qquad
  \operatorname{im}(\gamma T\gamma^{-1}-I)=\QQ x_2,
\]
where
\[
  x_1=(-4,1,2,1,-4,0),
\]
and
\[
  x_2=(40999920,-275447328,-132048384,
       236325024,314749968,0).
\]
The vectors $x_1$ and $x_2$ are linearly independent. 
The integral
symplectic form preserved by $A$ and $B$ is
\[
\Omega_{55}=\begin{pmatrix}
0&1&6&3&4&5\\
-1&0&1&6&3&4\\
-6&-1&0&1&6&3\\
-3&-6&-1&0&1&6\\
-4&-3&-6&-1&0&1\\
-5&-4&-3&-6&-1&0
\end{pmatrix}.
\]
Indeed,
\[
  A^t\Omega_{55}A=\Omega_{55},\qquad
  B^t\Omega_{55}B=\Omega_{55},
\]
and
\[
  \det(\Omega_{55})=4096\neq 0.
\]
Moreover,
\[
  \Omega_{55}(x_1,x_2)=0.
\]
The same exact computation gives that $T$ and $\gamma T\gamma^{-1}$ are
rank-one unipotents and that
\[
  T \, \gamma T \gamma^{-1} = \gamma T \gamma^{-1} \, T . 
\]
Thus $T$ and $\gamma T\gamma^{-1}$ are commuting transvections whose
directions are linearly independent and $\Omega_{55}$-orthogonal.

In both cases, the displayed form is integral and nondegenerate, and the
identities above show that the corresponding group
$\Gamma=\langle A,B\rangle$ lies in $\Sp_\Omega(\ZZ)$.  The Zariski-density
hypothesis in Lemma \ref{lem:bdn} is supplied by the Beukers--Heckman
classification, as recalled above.  Lemma \ref{lem:bdn} therefore applies to
C-47 and C-55, and shows that each group has finite index in its ambient
integral symplectic group.
\end{proof}

The accompanying
scripts\footnote{Available at \url{https://github.com/ditahd/Sp-6-Certificates}} reproduce the exact computations used in the proof. Precisely, they each reconstruct $A$, $B$, the
symplectic form, the witness word, and its inverse.
They verify over $\ZZ$ and $\QQ$ that $A$ and $B$ preserve the displayed form,
that $T$ and $\gamma T\gamma^{-1}$ are rank-one unipotents, that the two
elements commute, and that their directions are linearly independent and
$\Omega$-orthogonal.

\section{Limit set visualizations}\label{sec:limitset}

The proximal limit set $\Lambda(\Gammaab)$ of $\Gammaab = \langle A, B \rangle$
is the closure in $\mathbb{RP}^5$ of the attracting fixed lines of all
loxodromic elements of $\Gammaab$, and has visibly different qualitative
signatures in the two regimes: a proper compact subset (often fractal) in
the thin case versus the entire $\mathbb{RP}^5$ in the arithmetic case.  We
accompany the algebraic results with visualizations of these limit sets;
comparison with the limit set for C-32 forms the basis for its conjectured
thinness.

\paragraph{How we compute it.}

We approximate $\Lambda(\Gammaab)$ by a finite \emph{partial
orbit}: starting from one attracting fixed line $\xi_+ \in \Lambda(\Gammaab)$ of a loxodromic $\eta$, we
apply to it the set of freely reduced words of length at most $N$ in
$\{A^{\pm 1}, B^{\pm 1}\}$.  
For the loxodromic element we take $\eta = TBT$, with $T = A^{-1}B$ as previously.  In all five examples $\eta$ has a real, simple,
dominant top eigenvalue $\lambda_1$ with spectral gap $|\lambda_1/\lambda_2| \geq 10$,
so the iteration
\[
  v_{k+1} \;=\; \frac{\eta\, v_k}{\|\eta\, v_k\|}
\]
starting from a generic seed converges projectively to $\xi_+$ to working
precision in under $30$ steps.

We walk through the set of words of length $\leq N$ inductively and compute
the corresponding vectors in the orbit $\mathcal{O}_N$ at each step. For a word
$w = w'\, s$ the representative $w \cdot \xi_+ \in \mathbb{R}^6$ is
obtained from the parent's representative $w' \cdot \xi_+$ by a single
matrix-vector product with the generator $s$, then rescaled to unit norm
(preserving the projective class). 
In the images, we color points by the final generator applied with $a=$red, $A=$yellow, and $b=$green, $B=$blue.
Our visualizations use words of length up to 21, yielding orbits of approximately 20 billion points.

\paragraph{Projection $\mathbb{RP}^5 \dashrightarrow \mathbb{R}^3$.}
Fix $\ell \in (\mathbb{R}^6)^*$ and project onto the affine chart
$H_\ell = \{v : \ell(v) = 1\}$, discarding representatives with
$|\ell(v)| < 10^{-3}$ near infinity in the patch.  Inside $H_\ell$ we compute the principal components
of the centered cloud $\{v/\ell(v) : v \in \mathcal{O}_N\}$ and plot its
projection onto the top three.  Using principal components ensures that the
three plotted axes capture the directions of greatest variance in the
cloud, so that an orbit that genuinely spreads in more than three
dimensions is not artificially flattened by the projection.

\paragraph{The figures.}
We compare three groups of pictures.  Figure~\ref{fig:calibration} fixes
the visual templates of arithmetic and thin, showing the partial orbits
for two cases classified by \cite{BajpaiDonaNitsche2025Thin}: in their enumeration, A-1 (thin) and A-15 (arithmetic).  The proximal limit set of A-1 lies near a thin curve whereas even just the first million points for A-15 spread out in many directions.

Figure~\ref{fig:arithmetic} shows the
partial orbits for C-47 and C-55, the two cases proved arithmetic in
Theorem~\ref{thm:main}; both clouds can also be seen to be more complex than that of A-1.  Finally, Figure~\ref{fig:c32} shows the
partial orbit for C-32 appearing as a curve,
qualitatively closer to A-1 than to A-15 or C-47, C-55 images, suggesting it may be thin.

\begin{figure}[t]
  \centering
  \includegraphics[width=0.49\linewidth]{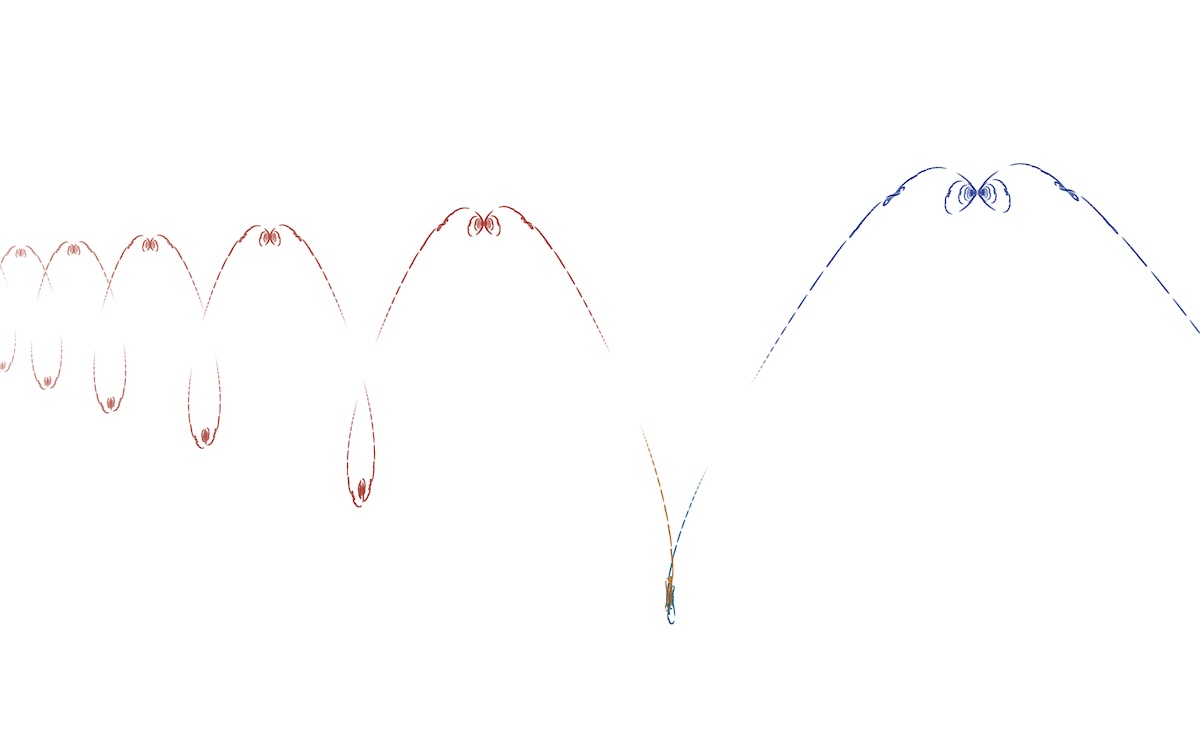}\hfill
  \includegraphics[width=0.49\linewidth]{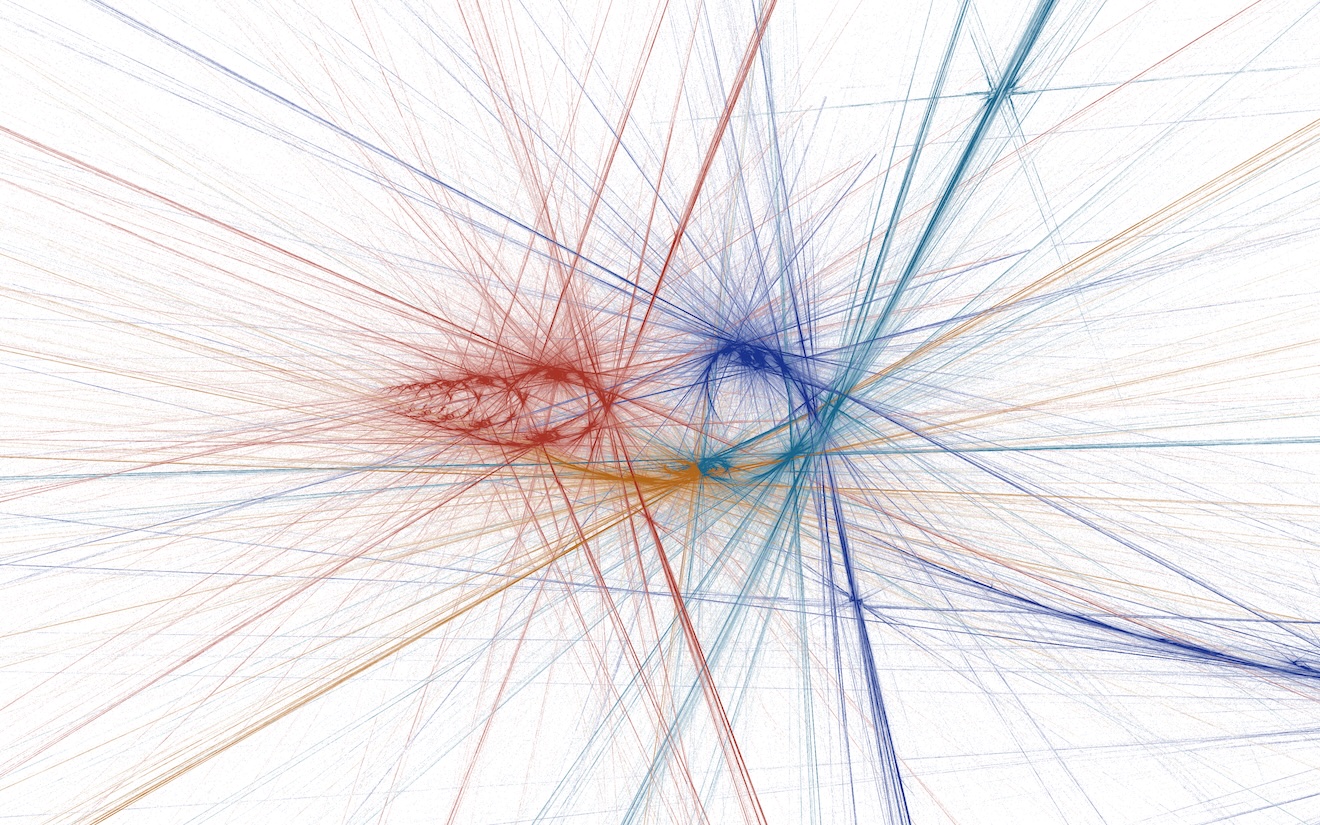}
  \caption{Partial limit sets for A-1 (left, thin)
    and A-15 (right, arithmetic) containing $\approx 20$ billion points.}
  \label{fig:calibration}
\end{figure}

\begin{figure}[t]
  \centering
  \includegraphics[width=0.49\linewidth]{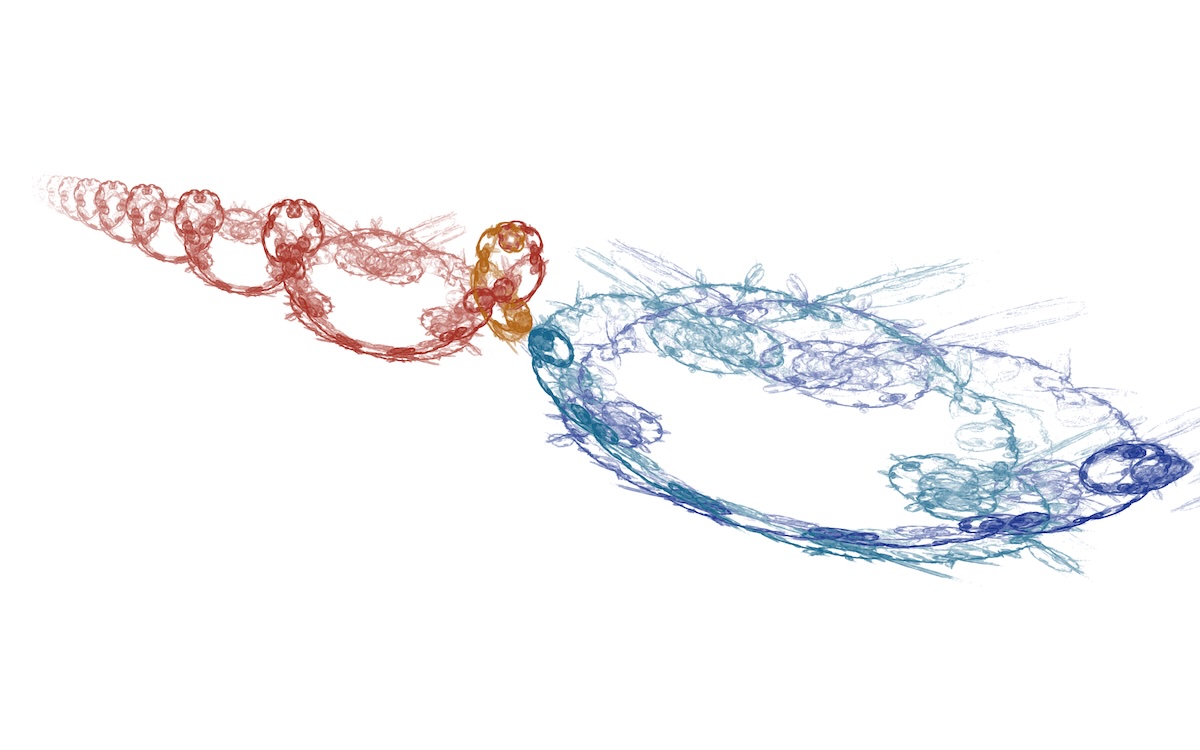}\hfill
  \includegraphics[width=0.49\linewidth]{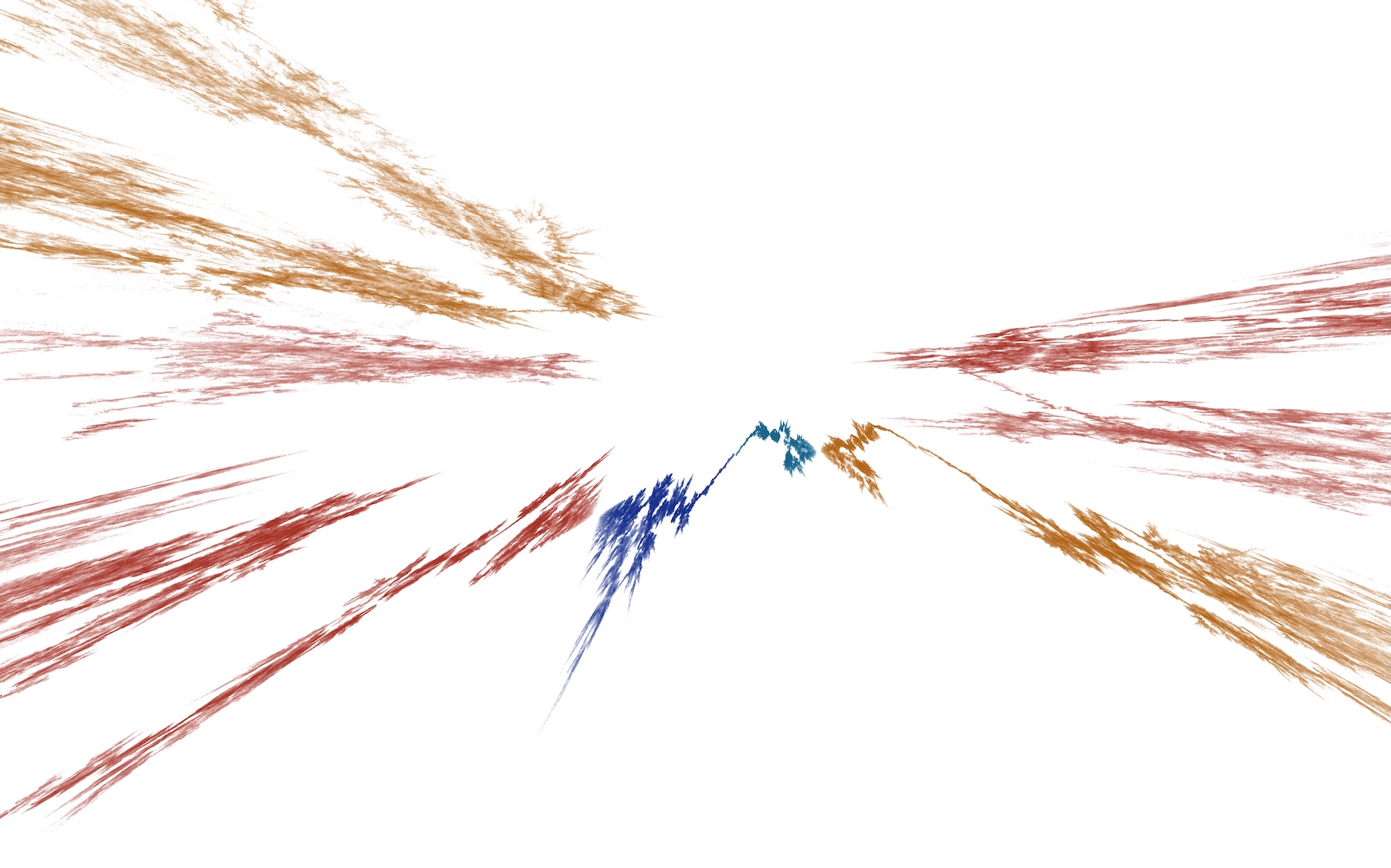}
  \caption{Partial limit sets for C-47 (left) and C-55
    (right) containing  $\approx 20$ billion points.}
  \label{fig:arithmetic}
\end{figure}

\begin{figure}[t]
  \centering
  \includegraphics[width=0.49\linewidth]{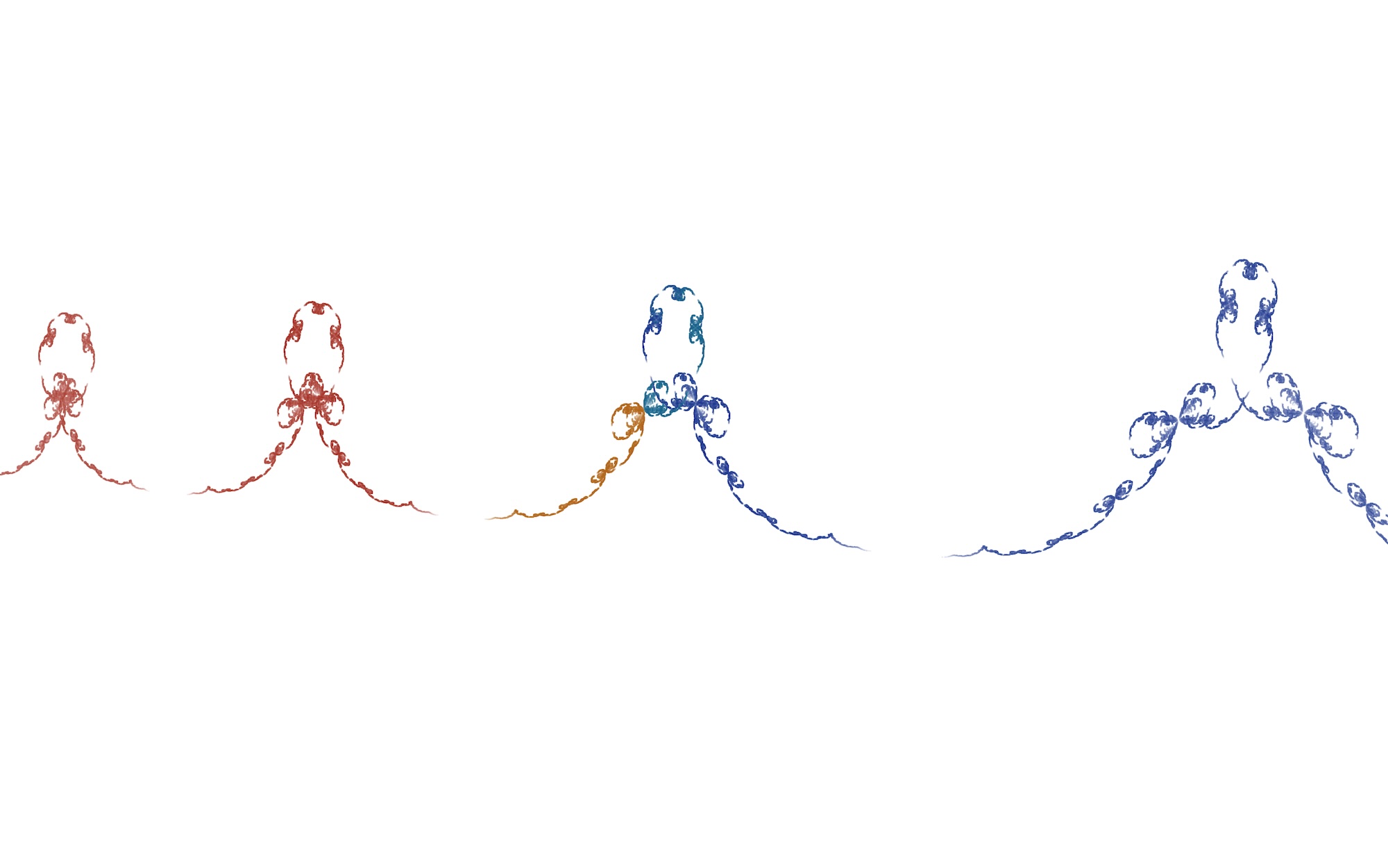}\hfill
   \includegraphics[width=0.49\linewidth]{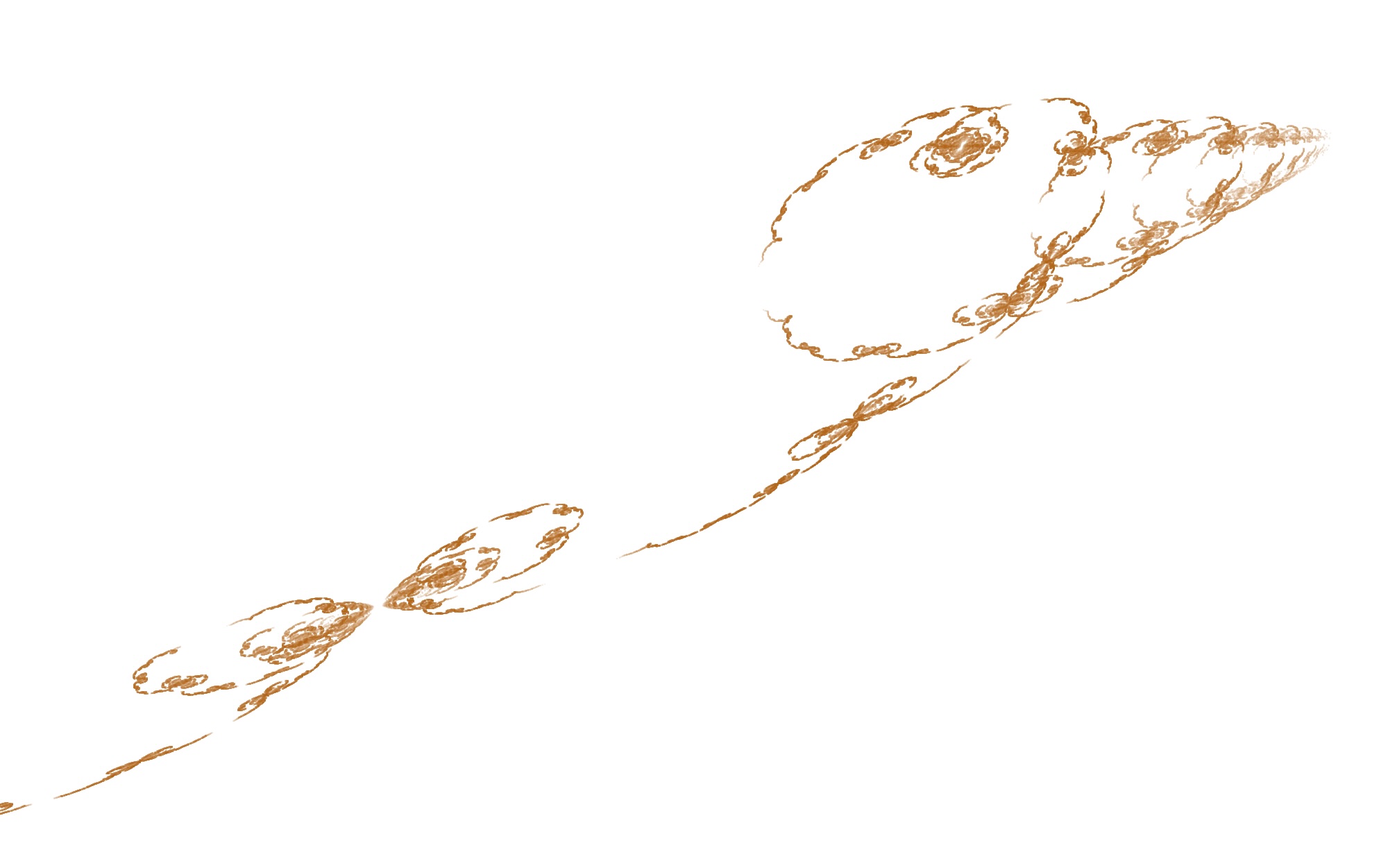}
  \caption{Partial limit set for C-32, containing 20 billion points. The right is a deep deep zoom.}
  \label{fig:c32}
\end{figure}

\section*{Acknowledgements}

The witness words in this note were found with AlphaEvolve.  DT thanks the
AlphaEvolve team at Google DeepMind for access to the system and for
discussions about auditable AI-assisted mathematical discovery.
AW thanks the Hector Fellow Academy for support.

\clearpage

\bibliographystyle{amsalpha}
\bibliography{biblio}

\end{document}